 \newlength{\baseunit}               % the basic unit length
\newcommand{\ra}{\rightarrow}
\newcommand\Sym{\operatorname{Sym}}
\newcommand\Spec{\operatorname{Spec}}
\newcommand\Proj{\operatorname{Proj}}
\newcommand\bq{\begin{equation}}
\newcommand\eq{\end{equation}}
\newtheorem{proposition}{Proposition}[section]
\newtheorem{theorem}[proposition]{Theorem}
\theoremstyle{definition}
\newtheorem{definition}[proposition]{Definition}
\theoremstyle{remark}
\newtheorem{remark}[proposition]{Remark}
\numberwithin{equation}{section}
\newcommand{\cut}[1]{}
\newcommand\hidden[1]{}
\newcommand{\cC}{{\mathcal{C}}}
\newcommand{\cM}{\mathcal{M}}
\newcommand{\PP}{\mathbb{P}}
\newcommand{\Mp}{{\mathcal{M}^+}}                                     %
\newcommand{\Rel}{{\mathcal{R}}}                                      %
\newcommand{\ZZ}{{\mathbb{Z}}}                                        %
\newcommand{\LL}{{\mathbb{L}}}                                        %
\newcommand{\cO}{{\mathcal O}}                                        %
\newcommand{\ch}{\operatorname{\tilde{c}_1}}                          %
\newcommand{\Schk}{\operatorname{Sch}_{k}}                                         %
\newcommand{\Ab}{\operatorname{Ab}_{*}}                                            %
\newcommand{\id}{\operatorname{Id}}                                                %
\newcommand{\Kth}{G_{0}[\beta,\beta^{-1}]}                                         %
\newcommand{\fxy}{f:X \ra Y}                                                       %
\newcommand{\fyx}{f:Y \ra X}                                                       %
\newcommand{\fxz}{f:X \ra Z}                                                       %
\newcommand{\gxy}{g:X \ra Y}                                                       %
\newcommand{\gyz}{g:Y \ra Z}                                                       %
\title{Universality of K-Theory}
\author{Jos\'e Luis Gonz\'alez and Kalle Karu}
\address{J.L. Gonz\'alez,  Department of Mathematics, University of British Columbia,
  Vancouver, BC V6T1Z2, CANADA  \newline \indent
K. Karu,
Department of Mathematics, University of British Columbia, 
  Vancouver, BC V6T1Z2, CANADA} 
\email{jgonza@math.ubc.ca, karu@math.ubc.ca}
\thanks{This research was funded by NSERC Discovery and Accelerator grants.}
\begin{document}
\begin{abstract}
We prove that graded $K$-theory is universal among oriented Borel-Moore homology theories with a multiplicative periodic formal group law.
%
%%%%%%%%%%%Other possible abstracts:
%
%We prove that graded $K$-theory is universal among oriented Borel-Moore homology theories with a multiplicative periodic formal group law on the category of characteristic zero algebraic schemes. %More generally, we prove that this holds on any l.c.i.-closed admissible subcategory $\cC$ of $\Schk$.
%
%The $K$-theory functor $\Kth$ is an oriented Borel-Moore homology theory, and its associated formal group law is multiplicative and periodic. We prove that $K$-theory is the universal such homology theory in the category algebraic schemes.
%
%We prove that the $K$-theory functor $\Kth$ is the universal oriented Borel-Moore weak multiplicative periodic homology theory in the category of algebraic schemes.
%  
\end{abstract}
\maketitle
\setcounter{tocdepth}{1} % this just includes subsections
\tableofcontents

%************************************************************************************************************************

%%%%%%%%%%%%%%%%%%%%%%%%%%%%%%%%%%%%%%%%%%%%%%%%%%%%%%%%%%%%%%%%%%%%%%%%%%%%%%%%%%%%%%%%%%%%%%%%%%%%%%%%%%%%%%%%%%%%%%%%%%%%%%%%
%%                                                                                                                            %%
%%                                                 SECTION 1.    INTRODUCTION                                                 %%
%%                                                                                                                            %%
%%%%%%%%%%%%%%%%%%%%%%%%%%%%%%%%%%%%%%%%%%%%%%%%%%%%%%%%%%%%%%%%%%%%%%%%%%%%%%%%%%%%%%%%%%%%%%%%%%%%%%%%%%%%%%%%%%%%%%%%%%%%%%%%

\section{Introduction}

%An oriented Borel-Moore homology theory is an algebro-geometric analogue of the notion of a homology theory from topology.

In \cite{DaiThesis} and \cite{DaiPaper} Shouxin Dai established a useful universal property enjoyed by the Gro\-then\-dieck $K$-theory functor of coherent sheaves $G_{0}$, that was conjectured by Levine and Morel in \cite{Levine-Morel}. Using results of Levine and Morel in \cite{Levine-Morel}, one can interpret Dai's theorem as describing some sufficient conditions under which $K$-theory can be recovered via extension of scalars from algebraic cobordism.
The argument that Dai provides in \cite{DaiThesis} and \cite{DaiPaper} yields the universality of $K$-theory for schemes that admit embeddings into smooth schemes in the category $\Schk$ of finite type separated schemes over a fixed field $k$ of characteristic zero.
% In Dai's argument, all the schemes considered admit embeddings into smooth schemes in the category $\Schk$ of finite type separated schemes over a fixed field $k$ of characteristic zero. 
We will build on Dai's work to show that this universality result holds over all of $\Schk$, and more generally over any \emph{l.c.i. closed admissible} subcategory of $\Schk$ as defined in \S~2.

The proper statement of Dai's universality theorem involves the notion of \emph{oriented Borel-Moore homology theories} (see \S~\ref{section.definition.OHT}). 
One can regard each of these theories as an additive functor with values on graded abelian groups, endowed with projective push-forwards, l.c.i. pull-backs and exterior products, which satisfy some natural axioms. 
%
%The algebraic cobordism theory $\Omega_{*}$ of Levine-Morel \cite{Levine-Morel}, the graded $K$-theory functor $G_{0}[\beta,\beta^{-1}]$ from %\cite{Levine-Morel} and the Chow homology functor $A_{*}$ from \cite{ITFulton} are among of the most familiar examples of these theories.
%
Each oriented Borel-Moore homology theory has a notion of Chern classes, and it is proved in \cite{Levine-Morel} that for each such theory there is an associated formal group law that governs the relation between the first Chern class of the tensor product of two line bundles with the first Chern classes of each of the factors. 

The simplest example of an oriented Borel-Moore homology theory is the Chow homology functor $A_{*}$ described in \cite{FultonIT}, which has an additive formal group law. In fact, in characteristic zero, $A_{*}$ is universal among oriented Borel-Moore homology theories for which the first Chern class operators are additive with respect to tensor products of line bundles \cite[Theorem 7.1.4]{Levine-Morel}. On the other extreme, Levine and Morel constructed in \cite{Levine-Morel} an oriented Borel-Moore homology theory $\Omega_{*}$ called algebraic cobordism that is an algebraic analogue of the theory of complex cobordism and whose first Chern class operators behave on tensor products of line bundles according to the universal formal group law $(F_{\LL},\LL)$. Moreover, Levine and Morel have proved that in characteristic zero, algebraic cobordism is the universal oriented Borel-Moore homology theory on $\Schk$.
The Grothendieck $K$-theory functor of coherent sheaves $G_{0}$, when made into a graded theory by $\Kth$, provides another important example of an oriented Borel-Moore homology theory.  The formal group law of this theory is the universal multiplicative periodic formal group law (see \ref{sub.formalgl} and \ref{ex.K.theory}). %$(F(u,v)=u+v-\beta uv,\ZZ[\beta,\beta^{-1}])$

Let $H_{*}$ be any oriented Borel-Moore homology theory with a multiplicative periodic formal group law defined on the full subcategory $\mathcal{S}$ of $\Schk$ consisting of the smooth quasiprojective schemes in $\Schk$. In \cite{Levine-Morel} Levine and Morel proved that there exists one and only one transformation $\Kth \ra H_{*}$ of oriented Borel-Moore homology theories on $\mathcal{S}$. 
%It is given by the assignment $E \mapsto \operatorname{rank}(E)-\ch(E^{\vee})$ for any vector bundle $E$ over a smooth scheme $X$..
In other words, they established that $\Kth$ is the universal multiplicative periodic oriented Borel-Moore homology theory on the category of smooth quasiprojective schemes over a field of characteristic zero. This led them to ask in \cite{Levine-Morel} whether this property still holds over all of $\Schk$.

Dai's universality result came close to answer Levine and Morel's question. One way to state Dai's result is that in characteristic zero, over any l.c.i. closed admissible subcategory $\mathcal{C}$ of $\Schk$ consisting of schemes that admit embeddings in smooth schemes in $\Schk$ (e.g. quasiprojective schemes), the natural morphism $\Omega_{*} \otimes_{\LL} \ZZ[\beta,\beta^{-1}] \ra G_{0}[\beta,\beta^{-1}]$ induced by the universality of algebraic cobordism is an isomorphism. In particular, it follows from the universality of algebraic cobordism that $K$-theory $G_{0}[\beta,\beta^{-1}]$ is universal among oriented Borel-Moore homology theories with a multiplicative periodic formal group law on any such category $\mathcal{C}$. 

The goal of this article is to extend Dai's universality to answer positively Levine and Morel's question. This is done in Theorem \ref{thm.universality} by using Dai's result along with a descent sequence for projective envelopes that holds both in algebraic cobordism and $K$-theory (see Theorems \ref{thm-Gillet} and \ref{thm-seq1}), obtaining that in characteristic zero the morphism $\Omega_{*} \otimes_{\LL} \ZZ[\beta,\beta^{-1}] \ra G_{0}[\beta,\beta^{-1}]$ is an isomorphism of oriented Borel-Moore homology theories over all of $\Schk$. In particular, in characteristic zero the graded $K$-theory functor $G_{0}[\beta,\beta^{-1}]$ is the universal multiplicative periodic oriented Borel-Moore homology theory on any l.c.i. closed admissible subcategory $\mathcal{C}$ of $\Schk$.

The descent sequence in $K$-theory that we need in the main argument was proved by Gillet in \cite{GilletSeq} as a corollary to a more general descent theorem for simplicial schemes. In the appendix we give a shorter proof of Gillet's theorem using Dai's universality result and the descent sequence for algebraic cobordism.

\subsection*{Acknowledgments}
This article builds on the result of Shouxin Dai establishing the desired universality property of $K$-theory for schemes that admit embeddings on smooth algebraic schemes.
%This article extends the result of Shouxin Dai who proved the universality of graded $K$-theory among oriented Borel-Moore homology theories with a multiplicative periodic formal group law in the category of quasiprojective schemes.

%%%%%%%%%%%%%%%%%%%%%%%%%%%%%%%%%%%%%%%%%%%%%%%%%%%%%%%%%%%%%%%%%%%%%%%%%%%%%%%%%%%%%%%%%%%%%%%%%%%%%%%%%%%%%%%%%%%%%%%%%%%%%%%%
%%                                                                                                                            %%
%%                                        SECTION 2. Oriented Borel-Moore Homology Theories                                   %%  
%%                                                                                                                            %%
%%%%%%%%%%%%%%%%%%%%%%%%%%%%%%%%%%%%%%%%%%%%%%%%%%%%%%%%%%%%%%%%%%%%%%%%%%%%%%%%%%%%%%%%%%%%%%%%%%%%%%%%%%%%%%%%%%%%%%%%%%%%%%%%

\section{Oriented Borel-Moore Homology Theories}   \label{section.definition.OHT}

\subsubsection{} \label{setting}
Throughout this article all of our schemes will be defined over a fixed field $k$ of characteristic zero. We denote by $\Schk$ the category of separated finite type schemes over $\Spec k$. For any full subcategory $\cC$ of $\Schk$, we denote by $\cC'$ the subcategory of $\cC$ with the same objects but whose morphisms are the projective morphisms. Smooth morphisms are assumed to be quasiprojective, and by definition locally complete intersection (l.c.i.) morphisms are assumed to admit a global factorization as the composition of a regular embedding followed by a smooth morphism. We follow the convention that smooth morphisms, and more generally l.c.i. morphisms, have a relative dimension. If $\fxy$ is an l.c.i. morphism of relative dimension $d$ (or relative codimension $-d$) and $Y$ is irreducible, then $X$ is a scheme of pure dimension equal to $\operatorname{dim}Y+d$. 
%If $\mathcal{E}$ is the sheaf of sections of a vector bundle $E$ on $X$, its associated projective bundle is $q: \PP(\mathcal{E}) = \Proj_{\mathcal{O}_{X}}(\Sym_{\mathcal{O}_{X}}^{*}\mathcal{E}) \ra X$ and the associated canonical quotient line bundle is denoted by $q^{*}E \ra O_{\PP(\mathcal{E})}(1)$. 
$\Ab$ will denote the category of graded abelian groups.

\subsubsection{} \label{def.admissible}
We call a full subcategory $\mathcal{C}$ of $\Schk$ \emph{admissible} if it satisfies the following conditions
\begin{enumerate}
\item[1.] $\operatorname{Spec}k$ and the empty scheme are in $\mathcal{C}$.
\item[2.] If $Y \ra X$ is a smooth morphism in $\Schk$ with $X \in \mathcal{C}$, then $Y \in \mathcal{C}$. 
\item[3.] If $X$ and $Y$ are in $\mathcal{C}$, then so is their product $X \times Y$.
\item[4.] If $X$ and $Y$ are in $\mathcal{C}$, then so is their disjoint union $X \coprod Y$.
\end{enumerate}

\subsubsection{}  \label{def.closed.admissible}  
We call a full subcategory $\mathcal{C}$ of $\Schk$ \emph{l.c.i.-closed admissible} if it is admissible and it satisfies the following conditions 
\begin{enumerate}
\item[1.] If $Y \ra X$ is an l.c.i. morphism in $\Schk$ with $X \in \mathcal{C}$, then $Y \in \mathcal{C}$.   
\item[2.] If $i: Z \ra X$ is a regular embedding in $\cC$, then the blowup of $X$ along $Z$ is in $\cC$. 
\end{enumerate}

\subsubsection{} 
Let $\fxz$ and $\gyz$ be morphisms in an admissible subcategory $\cC$ of $\Schk$. We say that $f$ and $g$ are transverse in $\cC$ if they satisfy
\begin{enumerate}
\item[1.] $\operatorname{Tor}_{j}^{\mathcal{O}_{Z}}(\mathcal{O}_{Y},\mathcal{O}_{X})=0$ for all $j>0$.
\item[2.] The fiber product $X \times_{Z} Y$ is in $\cC$.
\end{enumerate}

%%%%%%%%%%%%%%%%%%%%%%%%%%%%%%%%%%%%%%%%%%%%%%%%%%%%%%%%%%%%%%%%%%%%%%%%%%%%%%%%%%%%%%%%%%%%%%%%%%%%%%%%%%%%%%%%%%%%%%%%%%%%%%%%
%       	                 		BOREL-MOORE FUNCTORS WITH...   a.k.a. OBM Homology Theories                                      %
%%%%%%%%%%%%%%%%%%%%%%%%%%%%%%%%%%%%%%%%%%%%%%%%%%%%%%%%%%%%%%%%%%%%%%%%%%%%%%%%%%%%%%%%%%%%%%%%%%%%%%%%%%%%%%%%%%%%%%%%%%%%%%%%

\subsection{Oriented Borel-Moore Homology Theories}

\begin{definition}[Levine-Morel]  \label{def.OBMF}
Let $\mathcal{C}$ be an admissible subcategory of $\Schk$.
An \emph{oriented Borel-Moore homology theory} (or \emph{OBM homology theory}) $H_{*}$ on $\mathcal{C}$ is given by:

($\operatorname{D_{1}}$) An \emph{additive} functor $H_{*}: \mathcal{C}' \ra \Ab$, i.e., a functor $H_{*}: \mathcal{C}' \ra \Ab$ such that for any finite family $(X_{1},\ldots,X_{r})$ of schemes in $\mathcal{C}'$, the morphism
	\[
\bigoplus_{i=1}^{r}H_{*}(X_{i}) \ra H_{*}(\coprod_{i=1}^{r}X_{i})
\]
induced by the projective morphisms $X_{i} \subseteq \coprod_{i=1}^{r}X_{i}$ is an isomorphism.

($\operatorname{D_{2}}$) For each l.c.i. morphism $\fyx$ in $\mathcal{C}$ of relative dimension $d$ a homomorphism of graded groups
	\[
f^{*}:H_{*}(X)\ra H_{*+d}(Y).	
\]

($\operatorname{D_{3}}$) An element $1 \in H_{0}(\Spec k)$ and for each pair $(X,Y)$ of schemes in $\mathcal{C}$, a bilinear graded pairing
	\begin{align*}
\times: H_{*}(X) \times H_{*}(Y) &\ra H_{*}(X \times Y)   \\
(\alpha, \beta)	&\mapsto \alpha \times \beta,
\end{align*}
called the \emph{exterior product}, which is associative, commutative and admits $1$ as unit element.

These data satisfy the following axioms:

($\operatorname{BM_{1}}$) One has $\id_{X}^{*}=\id_{H_{*}(X)}$ for any $X \in \mathcal{C}$. Moreover, for any pair of composable l.c.i. morphisms $\fyx$ and $g:Z \ra Y$, of pure relative dimensions $d$ and $e$ respectively, one has 
	\[
(f \circ g)^{*} = g^{*} \circ f^{*}: H_{*}(X)\ra H_{*+d+e}(Z).	
\]

($\operatorname{BM_{2}}$) For any projective morphism $\fxz$ and any l.c.i. morphism $\gyz$ that are transverse in $\mathcal{C}$, if one forms the fiber diagram
\[
\xymatrix{
W \ar[d]^{f'} \ar[r]^{g'} &X \ar[d]^f \\
Y \ar[r]^g&Z }
\]
(note that in this case $f'$ is projective and $g'$ is l.c.i.) then
	\[
g^{*}f_{*}=f'_{*}g'^{*}.
\]

($\operatorname{BM_{3}}$) Given projective morphisms $f:X\ra X'$ and $g:Y\ra Y'$ one has that for any classes $\alpha \in H_{*}(X)$ and $\beta \in H_{*}(Y)$ 
	\[
	(f \times g)_{*}(\alpha \times \beta)   =    f_{*}(\alpha) \times g_{*}(\beta)   \in H_{*} (X' \times Y').
\]

Given l.c.i. morphisms $f:X\ra X'$ and $g:Y\ra Y'$ one has that for any classes $\alpha \in H_{*}(X')$ and $\beta \in H_{*}(Y')$
	\[
	(f \times g)^{*}(\alpha \times \beta)   =    f^{*}(\alpha) \times g^{*}(\beta)  \in H_{*} (X \times Y).
\]

($\operatorname{PB}$) For any line bundle $L \ra Y$ in $\mathcal{C}$ with zero section $s: Y \ra L$, define the operator $\ch(L): H_{*}(Y)\ra H_{*-1}(Y)$ by $\ch(L)=s^{*}s_{*}$. 
Let $\mathcal{E}$ be the sheaf of sections of a rank $n+1$ vector bundle $E$ on $X$ in $\mathcal{C}$, with associated projective bundle $q: \PP(\mathcal{E}) = \Proj_{\mathcal{O}_{X}}(\Sym_{\mathcal{O}_{X}}^{*}\mathcal{E}) \ra X$ and associated canonical quotient line bundle $q^{*}E \ra O_{\PP(\mathcal{E})}(1)$. 
For $i= 0,\ldots,n$, let
\[
\xi^{(i)}:H_{*+i-n}(X) \ra H_{*}(\PP(\mathcal{E})) 
\]
be the composition of $q^{*}: H_{*+i-n}(X) \ra H_{*+i}(\PP(\mathcal{E}))$ with $\ch(O_{\PP(\mathcal{E})}(1))^{i}:H_{*+i}(\PP(\mathcal{E})) \ra H_{*}(\PP(\mathcal{E}))$. Then the homomorphism
\[
\sum_{i=0}^{n}\xi^{(i)}: \bigoplus_{i=0}^{n} H_{*+i-n}(X) \ra H_{*}(\PP(\mathcal{E}))
\]
is an isomorphism.

($\operatorname{EH}$) Let $E \ra X$ be a vector bundle of rank $r$ over $X$ in $\mathcal{C}$, and let $p: V \ra X$ be an $E$-torsor. Then $p^{*}: H_{*}(X) \ra H_{*+r}(V)$ is an isomorphism. 

($\operatorname{CD}$) For integers $r, n > 0$, let $W=\PP^{n} \times \cdots \times \PP^{n}$ ($r$ factors), and let $p_{i}: W \ra \PP^{n}$ be the $i$-th projection. Let $x_{0},\ldots,x_{n}$ be the standard homogeneous coordinates on $\PP^{n}$, let $a_{1},\ldots,a_{r}$ be non-negative integers, and let $j: E \ra W$ be the subscheme defined by $\prod_{i=1}^{r}p_{i}^{*}(x_{n})^{a_{i}}=0$. Suppose that $E$ is in $\cC$. Then $j_{*}: H_{*}(E) \ra H_{*}(W)$ is injective.

\end{definition}

\subsubsection{} 

If $H_{*}$ is an OBM homology theory, for each projective morphism $f$ the homomorphism $H_{*}(f)$ is denoted $f_{*}$ and called the \emph{push-forward along} $f$. For each l.c.i. morphism $g$ the homomorphism $g^{*}$ is called the \emph{l.c.i. pull-back along} $g$.
For any line bundle $L \ra Y$ the operator $\ch(L):H_{*}(Y)\ra H_{*-1}(Y)$ is called the \emph{first Chern class operator of} $L$. A \emph{morphism} $\vartheta: H_{*} \ra H'_{*}$ \emph{of OBM homology theories} is a natural transformation $\vartheta: H_{*} \ra H'_{*}$ of functors $\mathcal{C}' \ra \Ab$ which moreover commutes with the l.c.i. pull-backs and preserves the exterior products. The axioms give $H_{*}(\Spec k)$ a commutative, graded ring structure, give to each $H_{*}(X)$ the structure of $H_{*}(\Spec k)$-module, and imply that the operations $f_{*}$ and $g^{*}$ preserve the $H_{*}(\Spec k)$-module structure. For each smooth quasiprojective $X \in \Schk$ with structure morphism $f: X \ra \Spec k$, the element $f^{*}1$ is denoted by $1_{X}$.

%\begin{remark}
%We include the following observations to help the reader who wishes to consult the basic material on oriented Borel-Moore homology theories from \cite{Levine-Morel}, our guiding reference on this topic. Levine and Morel prove that an oriented Borel-Moore homology theory is in particular an oriented Borel-Moore weak homology theory \cite[Proposition 5.2.6]{Levine-Morel}. In their terminology, the latter is defined to be an oriented Borel-Moore functor with product, which satisfies the axioms (Sect), (PB), (EH), (Loc) and (FGL) \cite[Remark 4.1.9]{Levine-Morel}. Alternatively, that is the same as an oriented Borel-Moore $\LL$-functor, of geometric type, satisfying the axioms (PB), (EH) and (Loc) \cite[Remark 4.1.10]{Levine-Morel}.  
%\end{remark}

\subsubsection{}   \label{sub.formalgl}
Recall that a (commutative) formal group law on a commutative ring $R$ is a power series $F_R(u,v)\in R\llbracket u,v\rrbracket $ satisfying
\begin{enumerate}[(a)]
\item $F_R(u,0) = F_R(0,u) = u$,
\item $F_R(u,v)=  F_R(v,u)$,
\item $F_R(F_R(u,v),w) = F_R(u,F_R(v,w))$.
\end{enumerate}
Thus 
\[ F_R(u,v) = u+v +\sum_{i,j>0} a_{i,j} u^i v^j,\]
where $a_{i,j}\in R$ satisfy $a_{i,j}=a_{j,i}$ and some additional relations coming from property (c). 
%We think of $F_R$ as giving a formal addition \[ u+_{F_R} v = F_R(u,v).\]
%There exists a unique power series $\chi(u) \in R\llbracket u\rrbracket $ such that $F_R(u,\chi(u)) = 0$. Denote $[-1]_{F_R} u = \chi(u)$. Composing $F_R$ and $\chi$, we can form linear combinations \[ [n_1]_{F_R} u_1 +_{F_R}   [n_2]_{F_R} u_2 +_{F_R} \cdots +_{F_R} [n_r]_{F_R} u_r \in R\llbracket u_1,\ldots,u_r\rrbracket \] for $n_i\in \ZZ$ and $u_i$ variables.
A formal group law of the form $(F_{+}(u,v)=u+v,R)$ is called additive. Likewise, a formal group law of the form $(F_{\times}(u,v)=u+v-\beta u v,R)$, for some $\beta \in R$, is called multiplicative; and it is in addition called periodic if $\beta \in R$ is invertible. There exists a universal formal group law $(F_\LL,\LL)$, and its coefficient ring $\LL$ is called the \emph{Lazard ring}. In other words, to define a formal group law $(F,R)$ on a commutative ring $R$ is equivalent to give a ring homomorphism $\LL \ra R$. The Lazard ring can be constructed as the quotient of the polynomial ring $\ZZ[A_{i,j}]_{i,j>0}$ by the relations imposed by the three axioms above. 
The images of the variables $A_{i,j}$ in the quotient ring are the coefficients $a_{i,j}$ of the formal group law $F_\LL$. The ring $\LL$ is graded, with $A_{i,j}$ having degree $i+j-1$. %The power series $F_\LL(u,v)$ is then homogeneous of degree $-1$ if $u$ and $v$ both have degree $-1$.

\subsubsection{} 
Levine and Morel proved that any OBM homology theory $H_{*}$ on an l.c.i-closed admissible subcategory $\cC$ of $\Schk$ has an associated formal group law for its first Chern classes (see \cite[Definition 2.2.1, Corollary 4.1.8, Definition 4.1.9, Proposition 5.2.6]{Levine-Morel}). More precisely, there exists a unique formal group law $(F_{H},H_{*}(\Spec k))$ such that for any smooth quasiprojective scheme $Y$ in $\Schk$ and for any pair of line bundles $L$ and $M$ on $Y$ in $\cC$, one has
\[
\ch(L \otimes M)(1_{Y}) = F_{H}(\ch(L),\ch(M))(1_{Y}).
\]

%%%%%%%%%%%%%%%%%%%%%%%%%%%%%%%%%%%%%%%%%%%%%%%%%%%%%%%%%%%%%%%%%%%%%%%%%%%%%%%%%%%%%%%%%%%%%%%%%%%%%%%%%%%%%%%%%%%%%%%%%%%%%%%%
%       	                                      		Example:   Chow Theory                                                     %
%%%%%%%%%%%%%%%%%%%%%%%%%%%%%%%%%%%%%%%%%%%%%%%%%%%%%%%%%%%%%%%%%%%%%%%%%%%%%%%%%%%%%%%%%%%%%%%%%%%%%%%%%%%%%%%%%%%%%%%%%%%%%%%%

\subsection{Example: Chow Theory}  \label{ex.Chow} The Chow group functor $A_{*}$ is an OBM homology theory on $\Schk$. The existence of projective push-forwards, l.c.i. pull-backs and exterior products satisfying the required axioms can be found in \cite{FultonIT}. Moreover, given line bundles $L$ and $M$ over the same base one has the formula
\[
\ch(L \otimes M) = \ch(L) + \ch(M).
\] 
Hence, the formal group law of Chow theory is additive given by $(F_{A}(u,v)=u+v,\ZZ)$. In fact, Levine and Morel show in \cite[Theorem 7.1.4]{Levine-Morel} that in characteristic zero $A_{*}$ is universal among OBM homology theories with an additive formal group law on any l.c.i.-closed admissible subcategory $\cC$ of $\Schk$.

%%%%%%%%%%%%%%%%%%%%%%%%%%%%%%%%%%%%%%%%%%%%%%%%%%%%%%%%%%%%%%%%%%%%%%%%%%%%%%%%%%%%%%%%%%%%%%%%%%%%%%%%%%%%%%%%%%%%%%%%%%%%%%%%
%       	                                      		Example:   $K$-Theory                                                      %
%%%%%%%%%%%%%%%%%%%%%%%%%%%%%%%%%%%%%%%%%%%%%%%%%%%%%%%%%%%%%%%%%%%%%%%%%%%%%%%%%%%%%%%%%%%%%%%%%%%%%%%%%%%%%%%%%%%%%%%%%%%%%%%%

\subsection{Example: $K$-Theory}    \label{ex.K.theory}

For any scheme $X$ in $\Schk$, let $G_{0}(X)$ be the Grothendieck $K$-group of coherent $\mathcal{O}_{X}$-modules (see \cite[Chapter 15]{FultonIT}). The class of any coherent sheaf $\mathcal{F}$ on $X$ in the group $G_{0}(X)$ is denoted by $[\mathcal{F}]$. Following Levine and Morel \cite{Levine-Morel}, for each $X$ in $\Schk$ we denote by $G_{0}(X)[\beta,\beta^{-1}]$ the group $G_{0}(X) \otimes_{\ZZ} \ZZ[\beta, \beta^{-1}]$, where $\ZZ[\beta, \beta^{-1}]$ is the ring of Laurent polynomials in a variable $\beta$ of degree $1$. 
One can verify that the functor $G_{0}[\beta, \beta^{-1}]$ becomes an OBM homology theory with the projective push-forwards, l.c.i. pull-backs and exterior products given as follows. 
Define the push-forward for a projective morphism $\fxy$ by
\[%\vspace{-5mm} 
f_{*}([\mathcal{F}] \cdot \beta^{m}):= \sum^{\infty}_{i=0}(-1)^{i}[R^{i}f_{*}(\mathcal{F})] \cdot \beta^{m} \in G_{0}(Y)[\beta, \beta^{-1}]
\]
for any coherent $\mathcal{O}_{X}$-module $\mathcal{F}$ and any $m \in \ZZ$, where $R^{i}f_{*}$ denotes the $i$-th right derived functor of $f_*$. This sum is finite since $f$ is projective. 
The pull-back along an l.c.i. morphism $\gxy$ of relative dimension $d$ is defined by 
\[
g^{*}([\mathcal{F}] \cdot \beta^{m}):= 
\sum^{\infty}_{i=0}(-1)^{i}[L_{i}g^{*}(\mathcal{F})]  \cdot \beta^{m+d} = 
\sum^{\infty}_{i=0}(-1)^{i}  [ \operatorname{Tor}_i^{\mathcal{O}_{Y}}(\mathcal{O}_{X},\mathcal{F})   ]     \cdot \beta^{m+d}
\in G_{0}(X)[\beta, \beta^{-1}]
\]
for any coherent $\mathcal{O}_{Y}$-module $\mathcal{F}$ and any $m \in \ZZ$, where $L_{i}g^{*}$ denotes the $i$-th left derived functor of $g^*$. This sum is finite since $g$ is an l.c.i. morphism. 
For any $X$ and $Y$ in $\Schk$ the external product $$G_{0}(X)[\beta, \beta^{-1}] \times G_{0}(Y)[\beta, \beta^{-1}] \ra G_{0}(X \times Y)[\beta, \beta^{-1}]$$ is given by
\[
([\mathcal{F}_{1}]\cdot \beta^{m_1} ,[\mathcal{F}_{2}] \cdot \beta^{m_2}) \mapsto  [\pi^{*}_{X}(\mathcal{F}_{1}) \otimes_{X \times Y}   \pi^{*}_{Y}(\mathcal{F}_{2})] \cdot \beta^{m_1 + m_2},
\]
for any coherent $\mathcal{O}_{X}$-module $\mathcal{F}_{1}$ and any coherent $\mathcal{O}_{Y}$-module $\mathcal{F}_{2}$, where $\pi_{X}: X \times Y \ra X$ and $\pi_{X}: X \times Y \ra Y$ are the projections. Moreover, given line bundles $L$ and $M$ over the same base one has the formula
\[
\ch(L \otimes M) = \ch(L) + \ch(M) - \beta \cdot \ch(L) \circ \ch(M).
\] 
Hence, the formal group law of $K$-theory is multiplicative and periodic given by $(F_{K}(u,v)= u + v - \beta uv, \ZZ[\beta,\beta^{-1}])$.

%%%%%%%%%%%%%%%%%%%%%%%%%%%%%%%%%%%%%%%%%%%%%%%%%%%%%%%%%%%%%%%%%%%%%%%%%%%%%%%%%%%%%%%%%%%%%%%%%%%%%%%%%%%%%%%%%%%%%%%%%%%%%%%%
%       	                                      		Example:   Algebraic Cobordism                                             %
%%%%%%%%%%%%%%%%%%%%%%%%%%%%%%%%%%%%%%%%%%%%%%%%%%%%%%%%%%%%%%%%%%%%%%%%%%%%%%%%%%%%%%%%%%%%%%%%%%%%%%%%%%%%%%%%%%%%%%%%%%%%%%%%

\subsection{Example: Algebraic Cobordism}   \label{ex.cobordism}

Algebraic cobordism theory $\Omega_{*}$ was constructed by Levine and Morel in \cite{Levine-Morel}. We now recall the geometric presentation of this theory given by Levine and Pandharipande in \cite{Levine-Pandharipande}.

For $X$ in $\Schk$, let $\cM(X)$ be the set of isomorphism classes of projective morphisms $f: Y\to X$ for $Y \in \Schk$ smooth quasiprojective. This set is a monoid under disjoint union of the domains; let $\Mp(X)$ be its group completion. The class of $f: Y\to X$ in $\Mp(X)$ is denoted by $[f: Y\to X]$.

A double point degeneration is a morphism $\pi: Y\to \PP^1$, with $Y \in \Schk$ smooth quasiprojective of pure dimension, such that $Y_\infty = \pi^{-1}(\infty)$ is a smooth divisor on $Y$ and $Y_0=\pi^{-1}(0)$ is a union $A\cup B$ of smooth divisors intersecting transversely along $D=A\cap B$. Let $N_{A/D}$ and $N_{B/D}$ be the normal bundles of $D$ in $A$ and $B$, and define $\PP(\pi) = \PP(N_{A/D}\oplus \cO_D)$ (notice that $[\PP(N_{A/D}\oplus \cO_D) \ra X]=[\PP(N_{B/D}\oplus \cO_D)\ra X]$ because $N_{A/D} \otimes N_{B/D} \cong \cO_{Y}(A+B)|_D \cong \cO_D $). 

Let $X\in \Schk$, and let $Y \in \Schk$ be smooth quasiprojective of pure dimension. Let $p_1, p_2$ be the two projections of $X\times \PP^1$.  A double point relation is defined by a projective morphism $\pi: Y\to X\times\PP^1$, such that $p_2\circ \pi: Y\to \PP^1$ is a double point degeneration. Let 
\[ [Y_\infty \to X], \quad [A\to X],\quad [B\to X], \quad [\PP(p_2\circ\pi)\to X] \]
be the elements of $\Mp(X)$ obtained by composing with $p_1$. The double point relation is 
\[ [Y_\infty \to X] -[A\to X] - [B\to X] + [\PP(p_2\circ\pi)\to X] \in \Mp(X). \]

Let $\Rel(X)$ be the subgroup of $\Mp(X)$ generated by all the double point relations. The cobordism group of $X$ is defined to be
\[ \Omega_*(X) = \Mp(X)/\Rel(X).\]
The group $\Mp(X)$ is graded so that $[f: Y\to X]$ lies in degree $\dim Y$ when $Y$ has pure dimension. Since double point relations are homogeneous, this grading gives a grading on $\Omega_*(X)$. %We write $\Omega_n(X)$ for the degree $n$ part of  $\Omega_*(X)$. 
It is shown in \cite{Levine-Morel} that the graded group $\Omega_*(\Spec k)$ is isomorphic to $\LL$.

The existence of projective push-forwards, l.c.i. pull-backs and exterior products satisfying the required axioms can be found in \cite{Levine-Morel}. Moreover, one of the main results in \cite{Levine-Morel} is that in characteristic zero, $\Omega_{*}$ is the universal OBM homology theory on any l.c.i.-closed admissible subcategory $\cC$ of $\Schk$. In other words, for any OBM homology theory $H_{*}$ on $\cC$, there is a unique natural transformation of OBM homology theories $\vartheta: \Omega_{*} \ra H_{*}$.
%\[
%\vartheta: \Omega_{*} \ra H_{*}.
%\]
Given line bundles $L$ and $M$ over the same base one has the formula
\[
\ch(L \otimes M) = F_{\LL}(\ch(L),\ch(M)),
\] 
and therefore the formal group law of $\Omega_{*}$ is the universal formal group law ($F_{\LL}$,$\LL$).

%%%%%%%%%%%%%%%%%%%%%%%%%%%%%%%%%%%%%%%%%%%%%%%%%%%%%%%%%%%%%%%%%%%%%%%%%%%%%%%%%%%%%%%%%%%%%%%%%%%%%%%%%%%%%%%%%%%%%%%%%%%%%%%%
%       	                                      		        Universality                                                       %
%%%%%%%%%%%%%%%%%%%%%%%%%%%%%%%%%%%%%%%%%%%%%%%%%%%%%%%%%%%%%%%%%%%%%%%%%%%%%%%%%%%%%%%%%%%%%%%%%%%%%%%%%%%%%%%%%%%%%%%%%%%%%%%%

\subsection{Universality}      \label{ex.universality}
We summarize Theorem 1.4.6 from \cite{DaiThesis}. Let ($F_{R}$,$R$) be a formal group law corresponding to the ring homomorphism $\LL \ra R$. We define an OBM homology theory $\Omega^{F_{R}}_{*}$ on $\Schk$ by
\[
\Omega^{F_{R}}_{*} := \Omega_{*} \otimes_{\LL} R.
\]
More precisely, $\Omega^{F_{R}}_{*}(X) := \Omega_{*}(X) \otimes_{\LL} R$ for each $X \in \Schk$, and the projective push-forwards, l.c.i. pull-backs and exterior products are induced by functoriality by those of $\Omega_{*}$. From the universality of $\Omega_{*}$, it follows that in characteristic zero $\Omega^{F_{R}}_{*}$ is universal among OBM homology theories with formal group law $(F_{R},R)$ on any l.c.i. closed admissible subcategory of $\Schk$. 

The formal group laws $(F(u,v)=u+v,\ZZ)$ and $(F(u,v)=u+v-\beta uv,\ZZ[\beta,\beta^{-1}])$ are respectively the universal additive and the universal multiplicative periodic formal group laws. The homomorphisms $\LL \ra \ZZ$ and $\LL \ra \ZZ[\beta,\beta^{-1}]$ classifying these formal group laws allow us to define the OBM homology theories $\Omega^{+}_{*} := \Omega_{*} \otimes_{\LL} \ZZ$ and $\Omega^{\times}_{*} := \Omega_{*} \otimes_{\LL} \ZZ[\beta,\beta^{-1}]$ on any l.c.i. closed admissible subcategory $\cC$ of $\Schk$. It follows from the universality of algebraic cobordism as an OBM homology theory \cite[Theorem 7.1.3]{Levine-Morel} that these are respectively the universal additive OBM homology theory on $\cC$ and the universal multiplicative periodic OBM homology theory on $\cC$. As we mentioned earlier, Levine and Morel showed that the canonical natural transformation $\Omega^{+}_{*} \ra A_{*}$ is an isomorphism \cite[Theorem 7.1.4]{Levine-Morel}. The main goal of this article is to show in Theorem \ref{thm.universality} that the canonical natural transformation $\Omega^{\times}_{*} \ra \Kth$ is an isomorphism.

%%%%%%%%%%%%%%%%%%%%%%%%%%%%%%%%%%%%%%%%%%%%%%%%%%%%%%%%%%%%%%%%%%%%%%%%%%%%%%%%%%%%%%%%%%%%%%%%%%%%%%%%%%%%%%%%%%%%%%%%%%%%%%%%
%%                                                                                                                            %%
%%                                          SECTION 3. Universality of $K$-theory                                             %%  
%%                                                                                                                            %%
%%%%%%%%%%%%%%%%%%%%%%%%%%%%%%%%%%%%%%%%%%%%%%%%%%%%%%%%%%%%%%%%%%%%%%%%%%%%%%%%%%%%%%%%%%%%%%%%%%%%%%%%%%%%%%%%%%%%%%%%%%%%%%%%

\section{Universality of $K$-theory}   \label{section.exact.sequences}

In this section we prove the universality of $K$-theory. We start by stating a version of Dai's theorem proved in \cite{DaiThesis} which will constitute the central part of our argument. The assertions in (a) and (b) in the following theorem are merely restatements of Lemma 2.1.4 and Corollary 1.6.4 proved by Dai in \cite{DaiThesis}.

\begin{theorem}[Dai] \label{thm.dai} Let $k$ be a field of characteristic zero. The canonical natural transformation of OBM homology theories $\Omega_{*} \ra \Kth$ descends to a natural transformation of OBM homology theories $\Omega^{\times}_{*} \ra \Kth$ that satisfies:
\begin{enumerate}
\item[(a)] \label{thm.dai.surj} For every $X$ in $\Schk$ the homomorphism $\Omega^{\times}_{*}(X) \ra G_{0}(X)[\beta,\beta^{-1}]$ is surjective. 
\item[(b)] \label{thm.dai.iso} For every quasiprojective scheme $X$  in $\Schk$ the homomorphism $\Omega^{\times}_{*}(X) \ra G_{0}(X)[\beta,\beta^{-1}]$ is an isomorphism.
\end{enumerate}
\end{theorem}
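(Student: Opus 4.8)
The transformation $\vartheta$ exists formally: the canonical $\Omega_*\to G_0[\beta,\beta^{-1}]$ coming from the universality of $\Omega_*$ is $\LL$-linear, and the $\LL$-action on $G_0(X)[\beta,\beta^{-1}]$ factors through the classifying map $\LL\to\ZZ[\beta,\beta^{-1}]$ of the multiplicative periodic group law of $K$-theory, so the transformation factors through $\Omega^{\times}_{*}=\Omega_*\otimes_{\LL}\ZZ[\beta,\beta^{-1}]$. Of the two assertions, (a) is a generation statement I would deduce from resolution of singularities and d\'evissage, whereas (b) needs in addition the localization exact sequence and a comparison of the relations peculiar to $G$-theory.

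\emph{Part (a).} $G_0(X)$ is generated over $\ZZ$ by the classes $[\O_V]$ of integral closed subschemes $V\subseteq X$ (Noetherian induction on the dimension of the support). Fix such a $V$, of dimension $d$. By resolution of singularities in characteristic zero -- preceded, if necessary, by Chow's lemma so that the source remains quasiprojective -- there is a projective birational $\pi\colon\widetilde V\to V$ with $\widetilde V$ smooth quasiprojective; let $f\colon\widetilde V\to X$ be the composite with the inclusion. As $\widetilde V$ is smooth it carries a fundamental class $1_{\widetilde V}\in\Omega^{\times}_{*}(\widetilde V)$ with $\vartheta_{\widetilde V}(1_{\widetilde V})=[\O_{\widetilde V}]\cdot\beta^{d}$, so by naturality of $\vartheta$ and the formula for the projective push-forward in $G_0[\beta,\beta^{-1}]$,
\[
\vartheta_X\big(f_*1_{\widetilde V}\big)\;=\;\Big(\sum_{i\ge0}(-1)^{i}[R^{i}f_*\O_{\widetilde V}]\Big)\cdot\beta^{d}.
\]
Since $\pi$ is birational and $V$ integral, $\O_V\hookrightarrow f_*\O_{\widetilde V}$ with cokernel, and all $R^{i}f_*\O_{\widetilde V}$ with $i>0$, supported on the proper closed locus where $\pi$ fails to be an isomorphism; hence the bracketed sum equals $[\O_V]$ plus a $\ZZ$-combination of classes $[\O_W]$ with $\dim W<d$. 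By Noetherian induction on $d$ the correction term lies in the image of $\vartheta_X$, and $\ZZ[\beta,\beta^{-1}]$-linearity of $\vartheta_X$ then puts $[\O_V]$ and all its twists $[\O_V]\cdot\beta^{m}$ there as well; the case $d=0$ is immediate. This proves (a).

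\emph{Part (b).} By (a) it suffices to show $\vartheta_X$ injective for $X$ quasiprojective, and I would induct on $\dim X$. If $X$ is smooth quasiprojective then $\vartheta_X$ is an isomorphism by the theorem of Levine and Morel recalled above, which settles in particular $\dim X\le0$. For the inductive step we may assume $X$ reduced (neither theory sees nilpotents); let $j\colon U\hookrightarrow X$ be the smooth locus and $i\colon Z\hookrightarrow X$ the complementary reduced closed subscheme, so $U$ is smooth quasiprojective, $Z$ is quasiprojective, and $\dim Z<\dim X$. Levine and Morel's right-exact localization sequence for $\Omega_*$, tensored over $\LL$ with $\ZZ[\beta,\beta^{-1}]$ (only right-exactness of $\otimes$ is used), together with the localization sequence for $G$-theory, gives a commutative ladder with exact rows
\[
\xymatrix{
\Omega^{\times}_{*}(Z)\ar[d]^{\vartheta_Z}\ar[r]^{i_*} & \Omega^{\times}_{*}(X)\ar[d]^{\vartheta_X}\ar[r]^{j^*} & \Omega^{\times}_{*}(U)\ar[d]^{\vartheta_U}\ar[r] & 0\\
G_0(Z)[\beta,\beta^{-1}]\ar[r]^{i_*} & G_0(X)[\beta,\beta^{-1}]\ar[r]^{j^*} & G_0(U)[\beta,\beta^{-1}]\ar[r] & 0 }
\]
with $\vartheta_Z$ an isomorphism by induction, $\vartheta_U$ an isomorphism by Levine--Morel, and $\vartheta_X$ surjective by (a). If $\vartheta_X(a)=0$, then $\vartheta_U(j^*a)=j^*\vartheta_X(a)=0$, so $j^*a=0$, so $a=i_*\gamma$ for some $\gamma$, and $i_*(\vartheta_Z\gamma)=\vartheta_X(a)=0$; hence the injectivity of $\vartheta_X$ will follow once one knows
\[
\vartheta_Z^{-1}\Big(\ker\big(i_*\colon G_0(Z)[\beta,\beta^{-1}]\to G_0(X)[\beta,\beta^{-1}]\big)\Big)\;\subseteq\;\ker\big(i_*\colon\Omega^{\times}_{*}(Z)\to\Omega^{\times}_{*}(X)\big).
\]

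\emph{The main obstacle.} This inclusion is where the real work lies. The localization sequences are only right-exact; on the $K$-theory side $\ker(i_*)$ is exactly the image of the boundary map $\partial\colon K_1(U)\to G_0(Z)$ (still true after the harmless base change to $\ZZ[\beta,\beta^{-1}]$, which is flat over $\ZZ$), while on the cobordism side there is no higher theory and $\ker(i_*\colon\Omega^{\times}_{*}(Z)\to\Omega^{\times}_{*}(X))$ is not described by the Levine--Morel sequence at all. To close the argument one must produce geometric representatives for the classes $\partial(u)$: realizing a unit $u$ on $U$ as the restriction of a rational function on $X$, its divisor yields a class on $Z$ which, after resolving that divisor, one matches with a genuine cobordism class lying in $\ker i_*$, and one checks such classes generate. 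Carrying out this comparison of the two localization sequences, via resolution of singularities, is the technical heart of the argument; the remainder is formal. (Propagating the isomorphism from quasiprojective schemes to all of $\Schk$ is a separate step, done in Theorem~\ref{thm.universality} through the projective-envelope descent sequences, and is not needed here.)
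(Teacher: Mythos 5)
The paper does not actually prove this theorem: it is stated as a restatement of Lemma~2.1.4 and Corollary~1.6.4 of Dai's thesis and cited, so there is no in-paper proof to compare against. Judged on its own terms, your sketch of the formal existence of $\vartheta$ and of part (a) is sound and standard: d\'evissage generates $G_0(X)[\beta,\beta^{-1}]$ by classes $[\O_V]\beta^m$ for $V\subseteq X$ integral; resolution of singularities (after Chow's lemma) supplies a projective birational $f\colon\tilde V\to X$ factoring through $V$ with $\tilde V$ smooth quasiprojective; and the defect between $\vartheta_X(f_*1_{\tilde V})$ and $[\O_V]\cdot\beta^{\dim V}$ is supported in strictly lower dimension, so Noetherian induction on $\dim V$ closes (a) for every $X\in\Schk$.

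Part (b), however, is left with a genuine gap which you yourself flag as ``the technical heart.'' Your localization ladder correctly reduces injectivity of $\vartheta_X$ to the inclusion $\vartheta_Z^{-1}\bigl(\ker i_*^{G_0[\beta,\beta^{-1}]}\bigr)\subseteq\ker i_*^{\Omega^\times_*}$, but this inclusion is not a routine fill-in and your outline of how to obtain it does not amount to a proof. On the $K$-theory side $\ker i_*$ is governed by Quillen's boundary $\partial\colon G_1(U)\to G_0(Z)$, and $G_1(U)$ is not in general generated by units (there is an $SK_1$ part, and $U$ need not even be affine), so ``realizing $u$ as a rational function on $X$ and resolving its divisor'' does not obviously account for all of $\im\partial$. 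On the cobordism side there is no higher theory and the Levine--Morel localization sequence does not extend to the left, so $\ker i_*$ in $\Omega^\times_*(Z)$ has no intrinsic description to match against. Dai's actual argument takes a different route that exploits an embedding of $X$ into a smooth scheme --- this is precisely why the paper repeatedly describes his result as holding ``for schemes that admit embeddings into smooth schemes'' --- so the unfilled step in your proposal is exactly where Dai's proof departs from yours. As written, part (b) is unproved.
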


%%%%%%%%%%%%%%%%%%%%%%%%%%%%%%%%%%%%%%%%%%%%%%%%%%%%%%%%%%%%%%%%%%%%%%%%%%%%%%%%%%%%%%%%%%%%%%%%%%%%%%%%%%%%%%%%%%%%%%%%%%%%%%%%
%                 								                				ENVELOPES                                                            %
%%%%%%%%%%%%%%%%%%%%%%%%%%%%%%%%%%%%%%%%%%%%%%%%%%%%%%%%%%%%%%%%%%%%%%%%%%%%%%%%%%%%%%%%%%%%%%%%%%%%%%%%%%%%%%%%%%%%%%%%%%%%%%%%

\begin{definition}[Envelopes] \label{envelopes} An \emph{envelope} of a scheme $X$ in $\Schk$ is a proper morphism $\pi: \tilde{X} \ra X$ such that for every subvariety $V$ of $X$ there is a subvariety $\tilde{V}$ of $\tilde{X}$ that is mapped birationally onto $V$ by $\pi$. If $\pi: \tilde{X} \ra X$ is an envelope, we say that it is a projective envelope if $\pi$ is a projective morphism. Likewise, we say that the envelope $\pi: \tilde{X} \ra X$ is birational if for some dense open subset $U$ of $X$, $\pi$ induces an isomorphism $\pi|:\pi^{-1}(U) \ra U$. The composition of envelopes is again an envelope and a morphism obtained by base change from an envelope is again an envelope. 
The domain $\tilde{X}$ of an envelope is not required to be connected, then by Chow's lemma and induction on dimension it follows easily that for every scheme $X$ in $\Schk$ there exists a birational envelope $\pi: \tilde{X} \ra X$ such that $\tilde{X}$ is quasiprojective.
\end{definition}

Gillet in \cite{GilletSeq} established a descent sequence for envelopes in Chow theory and in $K$-theory. We state the $K$-theory version only:

\begin{theorem} [Gillet] \label{thm-Gillet} Let $\pi: \tilde{X}\to X$ be a projective envelope in $\Schk$ and let $p_1, p_2: \tilde{X}\times_X\tilde{X} \to \tilde{X}$ be the two projections. Then the sequence
 \[ G_0(\tilde{X}\times_X\tilde{X}) \xrightarrow{{p_1}_*-{p_2}_*} G_0(\tilde{X}) \stackrel{\pi_*}{\longrightarrow} G_0(X)\to 0\]
is exact.
\end{theorem}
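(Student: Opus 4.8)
The plan is to obtain this from the parallel descent sequence in algebraic cobordism, transported along the comparison map of Theorem~\ref{thm.dai}. Two reductions at the outset. First, $\pi$ is projective and $p_1,p_2$ are its base changes, hence also projective, so $\pi_*,{p_1}_*,{p_2}_*$ are defined on each of the OBM homology theories appearing below. Second, $\ZZ[\beta,\beta^{-1}]$ is faithfully flat over $\ZZ$, so a complex of abelian groups is exact if and only if it becomes exact after $-\otimes_{\ZZ}\ZZ[\beta,\beta^{-1}]$; hence it suffices to prove that the sequence of the theorem is exact after applying $-\otimes_{\ZZ}\ZZ[\beta,\beta^{-1}]$.

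Next I would establish the corresponding statement for algebraic cobordism, i.e.\ the exactness of
\[
\Omega_*(\tilde X\times_X\tilde X)\xrightarrow{\ {p_1}_*-{p_2}_*\ }\Omega_*(\tilde X)\xrightarrow{\ \pi_*\ }\Omega_*(X)\ra 0
\]
(this is Theorem~\ref{thm-seq1}). Here one works with the Levine--Pandharipande presentation of $\Omega_*$ recalled in~\ref{ex.cobordism}: the surjectivity of $\pi_*$ and the identity $\pi_*\circ({p_1}_*-{p_2}_*)=0$ (because $\pi\circ p_1=\pi\circ p_2$) are formal, whereas exactness in the middle is proved by Noetherian induction on $\dim X$, comparing the given envelope against a birational quasiprojective envelope (one exists by~\ref{envelopes}, and it is automatically projective because a proper morphism with quasiprojective source is projective) and exploiting that compositions and base changes of envelopes are again envelopes, so that a cobordism class on $\tilde X$ vanishing in $\Omega_*(X)$ can be corrected, one stratum of the envelope at a time, by a class pulled back from $\tilde X\times_X\tilde X$. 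Applying the right-exact functor $-\otimes_{\LL}\ZZ[\beta,\beta^{-1}]$ then yields exactness of
\[
\Omega^{\times}_{*}(\tilde X\times_X\tilde X)\ra\Omega^{\times}_{*}(\tilde X)\ra\Omega^{\times}_{*}(X)\ra 0 .
\]

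Now I would transport this along the natural transformation $\vartheta\colon\Omega^{\times}_{*}\ra G_{0}[\beta,\beta^{-1}]$ of Theorem~\ref{thm.dai}. Since $\vartheta$ is a morphism of OBM homology theories it commutes with projective push-forwards, so it carries the cobordism sequence onto the $K$-theory sequence, producing a commutative ladder whose top row is exact. By Theorem~\ref{thm.dai}(b), $\vartheta$ is an isomorphism at $\tilde X$ and at $\tilde X\times_X\tilde X$ as soon as $\tilde X$ is quasiprojective --- in which case $\tilde X\times_X\tilde X$, being a closed subscheme of $\tilde X\times_{\Spec k}\tilde X$, is quasiprojective as well --- and $\vartheta$ is surjective at $X$ by Theorem~\ref{thm.dai}(a); granting the universality Theorem~\ref{thm.universality}, all three vertical maps are isomorphisms, the ladder is an isomorphism of complexes, and exactness of the bottom row is immediate. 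Untensoring (faithful flatness once more) gives the theorem as stated.

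The real content of this route is Theorem~\ref{thm-seq1}, the cobordism descent sequence: that is where all the geometry lives. The main difficulty there is that an arbitrary projective envelope $\pi\colon\tilde X\ra X$ may have a source $\tilde X$ that is neither quasiprojective nor of dimension $\dim X$, so one cannot simply feed it into the Levine--Pandharipande presentation; the reduction along the stratification witnessing the envelope property, carried out compatibly with the l.c.i.\ pull-backs and projective push-forwards entering the double point relations, is the technical core. A secondary point is that the comparison step of the previous paragraph is a formality only once $\vartheta$ is known to be an isomorphism on all of $\Schk$; working solely from Theorem~\ref{thm.dai} one must first reduce to a quasiprojective source --- using a birational quasiprojective envelope of $\tilde X$ together with the behaviour of these sequences under composition of envelopes --- and then control $\ker(\vartheta_X)$ for non-quasiprojective $X$, which is precisely what Theorem~\ref{thm.universality} provides. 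For this reason the argument is most naturally run either as a corollary of the universality theorem or interleaved with its proof along the same Noetherian induction.
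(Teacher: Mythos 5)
Your high-level strategy — reduce to envelopes with quasiprojective source, compare the $K$-theory sequence to the cobordism sequence of Theorem~\ref{thm-seq1} via Dai's transformation, then bootstrap to arbitrary envelopes — matches the paper's. But there is a genuine gap at the crux of the argument, and you come close to naming it yourself without resolving it.

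The step that actually needs work is exactness in the middle of the $K$-theory sequence when $\tilde X$ is quasiprojective. At this stage in the logical order Theorem~\ref{thm.universality} is not yet available (it is proved \emph{from} Theorem~\ref{thm-Gillet}), so you cannot ``grant'' it. What Theorem~\ref{thm.dai} gives you unconditionally is that $\vartheta$ is an isomorphism at $\tilde X$ and at $\tilde X\times_X\tilde X$ but only \emph{surjective} at $X$. With that much, the commutative ladder gives you surjectivity of $\pi_*$, but not middle exactness: if $b'\in G_0(\tilde X)[\beta,\beta^{-1}]$ maps to $0$ in $G_0(X)[\beta,\beta^{-1}]$, its preimage $b\in\Omega^\times_*(\tilde X)$ maps into $\ker(\vartheta_X)$, which you have no control over, so you cannot conclude $b\in\operatorname{im}({p_1}_*-{p_2}_*)$. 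Your proposal acknowledges that one must ``control $\ker(\vartheta_X)$'' and suggests interleaving with the Noetherian induction, but does not say how that induction is carried through. The paper supplies precisely the missing $K$-theoretic input: Quillen's localization sequence $G_1(U)\to G_0(Z)\to G_0(X)\to G_0(U)\to 0$, fed into a three-row ladder (the diagram \eqref{diagram.quillen}), where the middle row is the descent sequence for the closed stratum $Z$ (available by the inductive hypothesis) and the diagram chase uses the $G_1$-terms to correct the class at $\tilde Z$ before pushing forward to $\tilde X\times_X\tilde X$. Without this localization input, or something playing the same role, the Noetherian induction has nothing to bite on, and the reduction to a birational quasiprojective envelope you describe does not by itself produce middle exactness.

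Two smaller points. First, you gesture at re-proving Theorem~\ref{thm-seq1} from the Levine–Pandharipande presentation; that is not needed here (the paper cites it from \cite{descentseq}), and the sketch you give is not close to a proof, so it would be cleaner to simply invoke it. Second, the observation that one can first establish the theorem for quasiprojective $\tilde X$, deduce from the resulting isomorphism $\Omega^\times_*(X)\cong G_0(X)[\beta,\beta^{-1}]$ the general case of the theorem via the ladder with exact top row, is exactly the paper's outer reduction and is correct; the gap is only in the inner step.
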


Exactness of Gillet's descent sequence is equivalent to exactness of the sequence:
\[ G_0(\tilde{X}\times_X\tilde{X})[\beta,\beta^{-1}] \xrightarrow{{p_1}_*-{p_2}_*} G_0(\tilde{X})[\beta,\beta^{-1}]  \stackrel{\pi_*}{\longrightarrow} G_0(X)[\beta,\beta^{-1}] \to 0.\]

The analogous descent sequence for algebraic cobordism was proved in \cite{descentseq}:

\begin{theorem} \label{thm-seq1} Let $\pi: \tilde{X}\to X$ be a projective envelope in $\Schk$ and let $p_1, p_2: \tilde{X}\times_X\tilde{X} \to \tilde{X}$ be the two projections. Then the sequence
 \[ \Omega_*(\tilde{X}\times_X\tilde{X}) \xrightarrow{{p_1}_*-{p_2}_*} \Omega_*(\tilde{X}) \stackrel{\pi_*}{\longrightarrow} \Omega_*(X)\to 0\]
is exact.
\end{theorem}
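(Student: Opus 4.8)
The plan is to prove exactness by Noetherian induction on $\dim X$, exploiting the localization property of $\Omega_*$ together with the fact that a projective envelope admits a birational section over a dense open subset of each component of $X$. Before starting I would record an available reduction to the case where $\tilde X$ is smooth quasiprojective: given an arbitrary projective envelope $\pi$, Definition~\ref{envelopes} and resolution of singularities produce a birational envelope $q\colon\tilde X'\to\tilde X$ with $\tilde X'$ smooth quasiprojective, so $\pi':=\pi q$ is again a projective envelope; granting the theorem for $\pi'$ and the surjectivity of $q_*$, one deduces it for $\pi$ by lifting a class $\eta\in\ker\pi_*$ to $\eta'\in\Omega_*(\tilde X')$ with $q_*\eta'=\eta$, writing $\eta'=({p'_1}_*-{p'_2}_*)\xi$ over $\tilde X'\times_X\tilde X'$, and pushing $\xi$ forward along $\tilde X'\times_X\tilde X'\to\tilde X\times_X\tilde X$ using $q\circ p'_i=p_i\circ(q\times_X q)$. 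In practice the induction below runs verbatim without this reduction, so I would likely skip it.

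\emph{Surjectivity of $\pi_*$.} I would induct on $\dim X$, the case $\dim X=0$ being immediate. In general the envelope property gives, over the generic point of each component of $X$, a subvariety of $\tilde X$ mapping birationally down, hence a dense open $U\subseteq X$ meeting every component together with a section $s\colon U\to\tilde X_U:=\pi^{-1}(U)$ of $\pi_U$ which is a closed immersion. Set $Z=X\setminus U$ (so $\dim Z<\dim X$), $\tilde Z=\pi^{-1}(Z)$, with inclusions $\iota\colon Z\hookrightarrow X$ and $\tilde\iota\colon\tilde Z\hookrightarrow\tilde X$. Given $\alpha\in\Omega_*(X)$: since $\pi_U s=\id$, the restriction $\alpha|_U$ equals $\pi_{U*}(s_*\alpha|_U)$; lift $s_*\alpha|_U$ to $\tilde\beta\in\Omega_*(\tilde X)$ using right-exactness of $\Omega_*(\tilde Z)\to\Omega_*(\tilde X)\to\Omega_*(\tilde X_U)\to0$; then $\alpha-\pi_*\tilde\beta=\iota_*\alpha'$ for some $\alpha'\in\Omega_*(Z)$, and the inductive hypothesis applied to the base-changed envelope $\pi_Z\colon\tilde Z\to Z$ gives $\alpha'=\pi_{Z*}\beta'$, whence $\alpha=\pi_*(\tilde\beta+\tilde\iota_*\beta')$.

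\emph{Exactness in the middle.} I would keep $U,Z,s$ and use that $(\tilde X\times_X\tilde X)_U=\tilde X_U\times_U\tilde X_U$ and $(\tilde X\times_X\tilde X)_Z=\tilde Z\times_Z\tilde Z$. Let $\alpha\in\Omega_*(\tilde X)$ with $\pi_*\alpha=0$. Over $U$ the graph $\delta\colon\tilde X_U\to\tilde X_U\times_U\tilde X_U$, $x\mapsto(x,s\pi_U(x))$, satisfies $p_1\delta=\id$ and $p_2\delta=s\pi_U$, so $({p_1}_*-{p_2}_*)(\delta_*\alpha_U)=\alpha_U-s_*\pi_{U*}\alpha_U=\alpha_U$, where $\alpha_U$ is the restriction of $\alpha$. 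Lifting $\delta_*\alpha_U$ to $\beta\in\Omega_*(\tilde X\times_X\tilde X)$, the class $\alpha-({p_1}_*-{p_2}_*)\beta$ restricts to $0$ on $\tilde X_U$, hence equals $\tilde\iota_*\alpha'$ for some $\alpha'\in\Omega_*(\tilde Z)$; applying $\pi_*$ and using $\pi p_1=\pi p_2$ yields $\iota_*(\pi_{Z*}\alpha')=0$ in $\Omega_*(X)$. If one could conclude $\pi_{Z*}\alpha'=0$, the inductive hypothesis for $\pi_Z$ would write $\alpha'=({p_1}_*-{p_2}_*)\gamma$ over $\tilde Z\times_Z\tilde Z$, and pushing $\gamma$ into $\tilde X\times_X\tilde X$ would finish the proof.

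The hard part will be exactly this last inference: the localization sequence for $\Omega_*$ is only right-exact, so $\iota_*(\pi_{Z*}\alpha')=0$ does not force $\pi_{Z*}\alpha'=0$. I expect the fix to require a more careful simultaneous diagram chase through the localization sequences for $\tilde X$, for $\tilde X\times_X\tilde X$, and probably for the triple fibre product $\tilde X\times_X\tilde X\times_X\tilde X$, exploiting that $\alpha'$ is only defined modulo $\ker\tilde\iota_*$ and that $\beta$ may be altered by any class supported on $\tilde Z\times_Z\tilde Z$ --- equivalently, showing that the whole augmented \v{C}ech-type complex built from $\Omega_*$ along $\pi$ has vanishing homology by d\'evissage along the dimension filtration of $X$, with the generic section handling each graded piece. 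Everything else is formal manipulation with the axioms of an OBM homology theory.
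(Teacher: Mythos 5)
The paper does not prove Theorem~\ref{thm-seq1} at all; it quotes it from \cite{descentseq} and uses it as a black box, so there is no internal proof to compare against. Judged on its own merits, your proposal is partly correct and partly, as you yourself flag, incomplete.

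Your surjectivity argument is sound: Noetherian induction on dimension, a birational section $s\colon U\to\pi^{-1}(U)$ over a dense open furnished by the envelope property, right-exactness of localization for $\Omega_*$, and the inductive hypothesis applied to the base-changed envelope $\pi_Z\colon\tilde Z\to Z$. This all works, and the little computation $(p_{1*}-p_{2*})\delta_*\alpha_U=\alpha_U$ is correct because $\pi_{U*}\alpha_U=(\pi_*\alpha)|_U=0$ by proper base change.

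The middle-exactness step, however, genuinely fails where you say it does. After writing $\alpha-(p_{1*}-p_{2*})\beta=\tilde\iota_*\alpha'$, you can only conclude $\iota_*(\pi_{Z*}\alpha')=0$, and since the localization sequence $\Omega_*(Z)\to\Omega_*(X)\to\Omega_*(U)\to 0$ has no left term, this does not give $\pi_{Z*}\alpha'=0$, which is what the inductive hypothesis requires. Neither of the two available freedoms repairs this: replacing $\beta$ by $\beta+k_*\gamma$, where $k\colon\tilde Z\times_Z\tilde Z\hookrightarrow\tilde X\times_X\tilde X$ is the inclusion and $\gamma\in\Omega_*(\tilde Z\times_Z\tilde Z)$, changes $\alpha'$ modulo $\ker\tilde\iota_*$ only by $(\tilde p_{1*}-\tilde p_{2*})\gamma$, which dies under $\pi_{Z*}$; and replacing $\alpha'$ by $\alpha'+\kappa$ with $\kappa\in\ker\tilde\iota_*$ is uncontrollable precisely because $\Omega_*$ has no analogue of $G_1$ to describe that kernel. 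What would rescue the argument is a lemma of the shape $\ker(\iota_*)\subseteq\pi_{Z*}(\ker\tilde\iota_*)$, allowing you to correct $\alpha'$ into $\ker\pi_{Z*}$ without disturbing $\tilde\iota_*\alpha'$; but that is a new claim of comparable depth to the theorem itself, and your proposal neither states nor proves it. The suggested d\'evissage via the augmented \v{C}ech complex hits the same wall: splicing the complexes over $Z$, $X$, $U$ into a long exact sequence again needs the missing kernel term of localization. This is exactly the obstruction that forced Gillet to use simplicial methods for the Chow and $K$-theory versions, and that the paper's appendix sidesteps for $K$-theory only by invoking Quillen's $G_1$, for which $\Omega_*$ has no counterpart. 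So the proposal, as it stands, has a real gap at the crux of the theorem.
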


Since tensor product is a right exact functor, we also get an exact sequence
 \[ \Omega^{\times}_*(\tilde{X}\times_X\tilde{X}) \xrightarrow{{p_1}_*-{p_2}_*} \Omega^{\times}_*(\tilde{X}) \stackrel{\pi_*}{\longrightarrow} \Omega^{\times}_*(X)\to 0.\]

We are now ready to prove the universality theorem.

\begin{theorem}[Universality of $K$-theory] \label{thm.universality} Let $k$ be a field of characteristic zero. Let $\cC$ be an l.c.i.-closed admissible subcategory of $\Schk$. Then $\Kth$ is the universal multiplicative periodic oriented Borel-Moore homology theory on $\cC$. 
%In other words, for any multiplicative periodic OBM homology theory $H_{*}$ on $\cC$, there is a unique natural transformation of OBM homology theories
%\[
%\vartheta: \Kth \ra H_{*}
%\]
\end{theorem}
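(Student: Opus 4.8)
The plan is to show that the canonical natural transformation $\vartheta \colon \Omega^{\times}_{*} \ra \Kth$ from the universality of algebraic cobordism (specialized to the multiplicative periodic formal group law, cf.\ \S~\ref{ex.universality} and Theorem~\ref{thm.dai}) is an isomorphism on every $X \in \cC$. Once this is known, universality of $\Kth$ among multiplicative periodic OBM homology theories on $\cC$ follows immediately: given any such theory $H_{*}$, the universality of $\Omega_{*}$ yields a unique morphism $\Omega_{*} \ra H_{*}$, which (since $H_{*}$ has formal group law $(F_{\times}, R)$) factors uniquely through $\Omega^{\times}_{*}$, and composing with $\vartheta^{-1}$ produces a morphism $\Kth \ra H_{*}$; uniqueness is inherited from the uniqueness of $\Omega_{*} \ra H_{*}$ together with the surjectivity in Theorem~\ref{thm.dai}(a). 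So the entire content is the isomorphism statement, which I would prove for all of $\Schk$ (hence for any l.c.i.-closed admissible $\cC$, since such a subcategory is a full subcategory and the transformation is compatible with the inclusion).

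By Theorem~\ref{thm.dai}(a), $\vartheta_X \colon \Omega^{\times}_{*}(X) \ra G_{0}(X)[\beta,\beta^{-1}]$ is surjective for every $X \in \Schk$, so it remains to prove injectivity. I would argue by Noetherian induction on $X$ (equivalently, induction on $\dim X$, with the empty scheme as trivial base case). Given $X$, choose by Definition~\ref{envelopes} a birational projective envelope $\pi \colon \tilde{X} \ra X$ with $\tilde{X}$ quasiprojective. Form the fiber square and consider the two descent sequences of Theorems~\ref{thm-Gillet} and~\ref{thm-seq1} (in their $[\beta,\beta^{-1}]$-, resp.\ $\Omega^{\times}_{*}$-, forms), which fit into a commutative ladder
\[
\begin{CD}
\Omega^{\times}_{*}(\tilde{X}\times_X\tilde{X}) @>{{p_1}_*-{p_2}_*}>> \Omega^{\times}_{*}(\tilde{X}) @>{\pi_*}>> \Omega^{\times}_{*}(X) @>>> 0 \\
@VV{\vartheta}V @VV{\vartheta}V @VV{\vartheta}V \\
G_0(\tilde{X}\times_X\tilde{X})[\beta,\beta^{-1}] @>{{p_1}_*-{p_2}_*}>> G_0(\tilde{X})[\beta,\beta^{-1}] @>{\pi_*}>> G_0(X)[\beta,\beta^{-1}] @>>> 0
\end{CD}
\]
with exact rows. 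Here $\tilde{X}$ is quasiprojective, so the middle vertical arrow is an isomorphism by Theorem~\ref{thm.dai}(b). The scheme $\tilde{X}\times_X\tilde{X}$ is \emph{not} quasiprojective in general, but since $\pi$ is birational there is a dense open $U \subseteq X$ over which $\pi$ is an isomorphism, so $\tilde{X}\times_X\tilde{X}$ agrees with $\tilde{X}$ over $U$ and its ``extra'' part is supported over the closed complement $Z = X \setminus U$, which has strictly smaller dimension. A straightforward diagram chase then shows $\vartheta_X$ is injective provided we know that the left vertical arrow $\vartheta_{\tilde{X}\times_X\tilde{X}}$ is surjective onto the relevant subgroup — or, more cleanly, provided $\vartheta$ is an isomorphism on all schemes of dimension $< \dim X$, which is the inductive hypothesis, after reducing the comparison on $\tilde{X}\times_X\tilde{X}$ to its lower-dimensional locus via a localization/excision argument for envelopes applied to $Z$.

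The main obstacle is precisely controlling $\tilde{X}\times_X\tilde{X}$, which need not lie in the ``good'' (quasiprojective) class and need not be of smaller dimension (its component over $U$ is a copy of $\tilde X$). The way I would handle this is to run a \emph{double} induction or to stratify: let $j\colon \tilde{X}\times_X\tilde{X} \hookleftarrow W$ be the closed locus lying over $Z$ and $\tilde X \setminus (\text{locus over }Z) \hookrightarrow \tilde{X}\times_X\tilde{X}$ the open part, which maps isomorphically to an open of $\tilde X$; using the compatibility of $\vartheta$ with the localization/excision behaviour of both theories along envelopes, the comparison of $\vartheta$ on $\tilde{X}\times_X\tilde{X}$ is reduced to its comparison on $\tilde X$ (an isomorphism) and on $W$, which has dimension $\le \dim Z < \dim X$, hence is handled by the inductive hypothesis. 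Chasing the ladder then gives that $\ker \vartheta_X = 0$. A point requiring care is that $\Omega^{\times}_*$ is defined as a tensor product over $\LL$, so statements about it must be derived by right-exactness from the corresponding statements for $\Omega_*$ (as already noted after Theorem~\ref{thm-seq1}); I would make sure every use of the cobordism descent sequence is first applied to $\Omega_*$ and then tensored down. With these reductions in place the induction closes and $\vartheta$ is an isomorphism on all of $\Schk$, completing the proof.
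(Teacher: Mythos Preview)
Your overall strategy --- build the ladder of descent sequences from Theorems~\ref{thm-Gillet} and~\ref{thm-seq1} and apply the five-lemma --- is exactly the paper's. The gap is your assertion that $\tilde{X}\times_X\tilde{X}$ is ``not quasiprojective in general,'' which then forces you into an inductive workaround. In fact $\tilde{X}\times_X\tilde{X}$ \emph{is} quasiprojective whenever $\tilde{X}$ is: the product $\tilde{X}\times_k\tilde{X}$ is quasiprojective over $k$, and since $X$ is separated the diagonal $X\to X\times_k X$ is a closed immersion, so its base change $\tilde{X}\times_X\tilde{X}\hookrightarrow\tilde{X}\times_k\tilde{X}$ realizes $\tilde{X}\times_X\tilde{X}$ as a closed subscheme of a quasiprojective scheme. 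Hence Dai's Theorem~\ref{thm.dai}(b) applies directly to \emph{both} left vertical arrows in your ladder, and the five-lemma finishes the proof in one stroke --- no induction, no stratification. That is precisely how the paper argues.

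Your inductive detour, besides being unnecessary, has its own gap: the claim that $W=\tilde{Z}\times_Z\tilde{Z}$ satisfies $\dim W\le\dim Z$ is false. If $\pi$ is the blow-up of a closed point in $\A^n$, then $Z$ is a point, $\tilde{Z}\cong\PP^{n-1}$, and $W\cong\PP^{n-1}\times\PP^{n-1}$ has dimension $2n-2$. So an induction on $\dim X$ does not reach $W$, and the ``localization/excision behaviour'' you invoke for $\Omega^{\times}_*$ is neither stated nor established in the paper. Once you correct the quasiprojectivity claim, all of this machinery becomes superfluous.
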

\begin{proof}

Algebraic cobordism $\Omega_{*}$ is the universal OBM homology theory on $\cC$, hence there is a canonical natural transformation of OBM homology theories $\phi: \Omega_{*} \ra \Kth$ on $\cC$. Since the formal group law of $\Kth$ is multiplicative and periodic, then $\phi$ induces a natural transformation of OBM homology theories $\psi: \Omega^{\times}_{*} \ra \Kth$. From the universality of $\Omega_{*}$, we know that $\Omega^{\times}_{*}$ is the universal multiplicative periodic OBM homology theory on $\cC$. Therefore, it is enough to show that $\psi_{X}: \Omega^{\times}_{*}(X) \ra G_{0}(X)[\beta,\beta^{-1}]$ is an isomorphism for each $X$ in $\cC$. 

We choose an envelope $\pi: \tilde{X}\to X$, such that $\tilde{X}$ is quasi-projective. Consider the commutative diagram with exact rows:
\begin{equation} \label{diagram}
\xymatrixcolsep{0.6in}\xymatrix{
\Omega^{\times}_{*}(\tilde{X} \times_{X} \tilde{X}) \ar[d]^{\psi_{\tilde{X} \times_{X} \tilde{X}}} \ar[r]^-{p_{1*}-p_{2*}} & 
\Omega^{\times}_{*}(\tilde{X}) \ar[d]^{\psi_{\tilde{X}}} \ar[r]^{\pi_*} &
\Omega^{\times}_{*}(X) \ar[d]^{\psi_{X}} \ar[r] & 
0 \\
G_{0}(\tilde{X} \times_{X} \tilde{X})[\beta,\beta^{-1}] \ar[r]^-{p_{1*}-p_{2*}} &
G_{0}(\tilde{X})[\beta,\beta^{-1}] \ar[r]^{\pi_*} &
G_{0}(X)[\beta,\beta^{-1}] \ar[r] & 0.
}
\end{equation}
The maps $\psi_{\tilde{X} \times_{X} \tilde{X}}$ and $\psi_{\tilde{X}}$ are isomorphisms by Dai's theorem since both  ${\tilde{X} \times_{X} \tilde{X}}$ and ${\tilde{X}}$ are quasiprojective. The five-lemma implies now that $\psi_X$ is also an isomorphism.
\end{proof}

\begin{remark}
The content of Theorem \ref{thm.universality} is that over a field of characteristic zero, $K$-theory admits a unique natural transformation of OBM homology theories $\vartheta: \Kth \ra H_{*}$ to any OBM homology theory $H_{*}$ with a multiplicative periodic formal group law over any l.c.i.-closed admissible subcategory $\mathcal{C}$ of $\Schk$. In the proof, one establishes the isomorphism of OBM homology theories 
\[
\Omega_{*} \otimes_{\LL} \ZZ[\beta,\beta^{-1}] \cong G_{0}[\beta,\beta^{-1}],
\]
over any such $\mathcal{C}$, via the transformation induced by the universality of algebraic cobordism.
\end{remark}

%%%%%%%%%%%%%%%%%%%%%%%%%%%%%%%%%%%%%%%%%%%%%%%%%%%%%%%%%%%%%%%%%%%%%%%%%%%%%%%%%%%%%%%%%%%%%%%%%%%%%%%%%%%%%%%%%%%%%%%%%%%%%%%%
%%                                                                                                                            %%
%%                                                APPENDIX. Descent for K-Theory                                              %%  
%%                                                                                                                            %%
%%%%%%%%%%%%%%%%%%%%%%%%%%%%%%%%%%%%%%%%%%%%%%%%%%%%%%%%%%%%%%%%%%%%%%%%%%%%%%%%%%%%%%%%%%%%%%%%%%%%%%%%%%%%%%%%%%%%%%%%%%%%%%%%

\appendix

\section{Descent for $K$-theory}   \label{section.universality}

We give here a slightly simplified proof of the descent sequence for $K$-theory Theorem~\ref{thm-Gillet}. The argument is very close to Gillet's original proof. It is shorter because we do not consider simplicial schemes. 
Instead we use the descent sequence in algebraic cobordism (Theorem \ref{thm-seq1}) and Dai's universality (Theorem \ref{thm.dai}).
%Instead we use Dai's theorem and Quillen's localization to deduce the descent for $K$-theory from the descent for algebraic cobordism.
%REMARK: This argument, of course, does not rely on Theorem \ref{thm.universality}.

It suffices to prove the theorem for envelopes $\pi: \tilde{X}\to X$ where $\tilde{X}$ is  quasiprojective. Diagram~\ref{diagram} in the proof of Theorem~\ref{thm.universality} then implies that $\Omega^\times_*(X) \to G_0(X)[\beta,\beta^{-1}]$ is an isomorphism. Since every $X$ admits such an envelope, this result is true for any $X$. Now if $\pi: \tilde{X} \to X$ is an arbitrary projective envelope, then in the Diagram~\ref{diagram} the vertical maps are isomorphisms and the top row is exact, hence the bottom row is also exact.

To prove the theorem for all envelopes of a fixed $X$ where $\tilde{X}$ is quasiprojective, we proceed by Noetherian induction, assuming the result holds for proper closed subschemes $Z$ of $X$ and envelopes $\tilde{Z}\to Z$ with $\tilde{Z}$ quasiprojective. It suffices to consider the case where $\pi: \tilde{X}\to X$ is birational (and $\tilde{X}$ is quasiprojective). As before, this proves the isomorphism  $\Omega^\times_*(X) \to G_0(X)[\beta,\beta^{-1}]$, and then for a not necessarily birational envelope $\pi:\tilde{X}\to X$, with $\tilde{X}$ quasiprojective, Diagram~\ref{diagram} has vertical maps isomorphisms and the top row exact, hence the bottom row is also exact.

Let $\pi: \tilde{X} \ra X$ be a projective birational envelope in $\Schk$, with $\tilde{X}$ quasiprojective, and let $U$ be a nonempty open subscheme of $X$ such that $\pi| : \pi^{-1}(U) \ra U$ is an isomorphism.  Let $Z=X\smallsetminus U$, $\tilde{U}=\pi^{-1}(U)$, $\tilde{Z}=\pi^{-1}(Z)$. Consider the following commutative diagram with maps as indicated:
\begin{equation} \label{diagram.quillen}
\xymatrixcolsep{0.5in}\xymatrix{
&G_{1}(\tilde{U})\ar[d]^{\delta} \ar[r]^-{(\pi|_{\tilde{U}})_*} &G_{1}(U) \ar[d]^{\delta}   \\
G_{0}(\tilde{Z} \times_{Z} \tilde{Z})\ar[d]^{i'_*} \ar[r]^-{\tilde{p}_{1*}-\tilde{p}_{2*}} &G_{0}(\tilde{Z})\ar[d]^{\tilde{i}_{*}} \ar[r]^-{(\pi|_{\tilde{Z}})_*} &G_{0}(Z)\ar[d]^{i_*} \ar[r] & 0 \\
G_{0}(\tilde{X} \times_{X} \tilde{X})\ar[r]^-{p_{1*}-p_{2*}}  &G_{0}(\tilde{X})\ar[r]^-{\pi_*} \ar[d]^{\tilde{j}^*} &G_{0}(X)\ar[d]^{j^*} \ar[r] & 0  \\
&G_{0}(\tilde{U})\ar[r]^-{(\pi|_{\tilde{U}})_*} \ar[d] &G_{0}(U)\ar[d]   \\
&0   &0.
}
\end{equation}
The columns are exact by Quillen's localization theorem \cite{Quillen} (with $G_1$ standing for the first higher $K$-group). The second row (the descent sequence for $Z$) is exact by induction assumption. We need to prove exactness of the third row. The surjectivity of $\pi_*$ holds by the five lemma applied to the columns; and since push-forwards are functorial, the following straightforward diagram chase yields the exactness in the middle.  

Let $\alpha_{0}\in G_0(\tilde{X})$ lie in the kernel of $\pi_*$. Since $(\pi|_{\tilde{U}})_* : G_{0}(\tilde{U})\ra G_{0}(U)$ is an isomorphism, then $\tilde{j}^{*}(\alpha_{0})=0$, and therefore there exist $\alpha_{1} \in G_{0}(\tilde{Z})$ such that $\tilde{i}_{*}(\alpha_{1})=\alpha_{0}$. Let $\alpha_{2} = (\pi|_{\tilde{Z}})_{*}(\alpha_{1})$, and by the vertical exactness choose $\alpha_{3} \in G_{1}(U)$ such that $\delta(\alpha_{3})=\alpha_{2}$. Let $\alpha_{4}=((\pi|_{\tilde{U}})_{*})^{-1}(\alpha_{3}) \in G_1(\tilde{U})$, and let $\alpha_{5}=\delta(\alpha_{4})\in G_{0}(\tilde{Z})$. By construction $\tilde{i}_{*}(\alpha_{1}-\alpha_{5})=\alpha_{0}$ and $(\pi|_{\tilde{Z}})_*(\alpha_{1}-\alpha_{5})=0$. We can choose $\alpha_{6} \in  G_{0}(\tilde{Z} \times_{Z} \tilde{Z})$ such that $(\tilde{p}_{1*}-\tilde{p}_{2*})(\alpha_{6}) = \alpha_{1}-\alpha_{5}$. We let $\alpha_{7}=i'_{*}(\alpha_{6}) \in G_{0}(\tilde{X} \times_{X} \tilde{X})$, and notice that in this way $(p_{1*}-p_{2*})(\alpha_{7}) = \alpha_{0}$. \qed

%%%%%%%%%%%%%%%%%%%%%%%%%%%%%%%%%%%%%%%%%%%%%
%%             BIBLIOGRAPHY                %%
%%%%%%%%%%%%%%%%%%%%%%%%%%%%%%%%%%%%%%%%%%%%%

\bibliographystyle{plain}
\bibliography{cobordismbib}

\end{document}